\newtheorem{theorem}{Theorem}[section]
\theoremstyle{definition}
\theoremstyle{remark}
\numberwithin{equation}{section}
\begin{document}
\setcounter{page}{1}

\title[  Nuclear pseudo-multipliers associated to the Harmonic oscillator ]{ Characterization of nuclear  pseudo-multipliers associated to the harmonic oscillator}

\author[D. Cardona]{Duv\'an Cardona}
\address{
  Duv\'an Cardona:
  \endgraf
  Department of Mathematics  
  \endgraf
  Pontificia Universidad Javeriana.
  \endgraf
  Bogot\'a
  \endgraf
  Colombia
  \endgraf
  {\it E-mail address} {\rm duvanc306@gmail.com;
cardonaduvan@javeriana.edu.co}
  }

 \author[E. Samuel Barraza]{E. Samuel Barraza}
\address{
 E. Samuel Barraza:
  \endgraf
  Department of computer science and Artificial intelligence. 
  \endgraf
  Universidad de Sevilla.
  \endgraf
 Sevilla
  \endgraf
 Spain
  \endgraf
  {\it E-mail address} {\rm edglibre@gmail.com;
edglibre@gmail.com; edgbarver@alum.us.es}
  }

\subjclass[2010]{Primary {81Q10 ; Secondary 47B10, 81Q05}.}

\keywords{ Harmonic oscillator, Fourier multiplier, Hermite multiplier, nuclear operator, traces}

\begin{abstract}
In this paper we  study pseudo-multipliers associated to the harmonic oscillator (also called Hermite multipliers) belonging to the ideal of $r$-nuclear operators on Lebesgue spaces.
\textbf{MSC 2010.} Primary {81Q10 ; Secondary 47B10, 81Q05}.
\end{abstract} \maketitle
\section{Introduction}
\subsection{Outline of the paper} In this paper, we are interested in the $r$-nuclearity of  pseudo-multipliers associated to the harmonic oscillator (also called Hermite pseudo-multipliers) on $L^p(\mathbb{R}^n)$-spaces. This paper is the continuation of the work \cite{BarrazaCardona} where the authors have given necessary conditions for the $r$-nuclearity of Hermite multipliers. Now,  we recall some notions about pseudo-multipliers. Let us consider the sequence of Hermite function on $\mathbb{R}^n,$
\begin{equation}
\phi_\nu=\Pi_{j=1}^n\phi_{\nu_j},\,\,\, \phi_{\nu_j}(x_j)=(2^{\nu_j}\nu_j!\sqrt{\pi})^{-\frac{1}{2}}H_{\nu_j}(x_j)e^{-\frac{1}{2}x_j^2}
\end{equation} 
where $x=(x_1,\cdots,x_n)\in\mathbb{R}^n$, $\nu=(\nu_1,\cdots,\nu_n)\in\mathbb{N}^n_0,$ and $H_{\nu_j}(x_j)$ denotes the Hermite polynomial of order $\nu_j$. It is well known that the Hermite functions provide a complete and orthonormal system in $L^2(\mathbb{R}^n).$ If we consider the operator $L=-\Delta+|x|^{2}$ acting on the Schwartz space $\mathscr{S}(\mathbb{R}^n),$ where $\Delta$ is the standard Laplace operator on $\mathbb{R}^n,$ then we have the relation 
$
L\phi_\nu=\lambda_\nu \phi_\nu,\,\,\nu\in\mathbb{N}_0^n.
$
The operator $L$ is symmetric and positive in $L^2(\mathbb{R}^n)$ and admits a self-adjoint extension $H$ whose domain is given by
\begin{equation}
\textnormal{Dom}(H)=\left\{\sum_{\nu\in\mathbb{N}_0^n}\langle f,\phi_\nu \rangle_{L^2}\phi_\nu:   \sum_{\nu\in\mathbb{N}_0^n}|\lambda_\nu\langle f,\phi_\nu \rangle_{L^2}|^2 <\infty \right\}. 
\end{equation} So, for $f\in \textnormal{Dom}(H), $ we have
\begin{equation}
(Hf)(x)=\sum_{\nu\in\mathbb{N}_{0}}\lambda_\nu\widehat{f}(\phi_\nu)\phi_\nu(x),\,\,\,\, \widehat{f}(\phi_\nu)=\langle f,\phi_\nu \rangle_{L^2}.
\end{equation}
The operator $H$  is precisely the quantum harmonic oscillator on $\mathbb{R}^n$ (see \cite{Prugovecki}). The sequence $\{\widehat{f}(\phi_v)\} $ determines  the Fourier-Hermite transform of $f,$ with corresponding inversion formula
\begin{equation}\label{inversion}
f(x)=\sum_{\nu\in \mathbb{N}^n_0}\widehat{f}(\phi_v)\phi_\nu(x).
\end{equation}
On the other hand, pseudo-multipliers are defined by the quantization process that associates to a function $m$  on $\mathbb{R}^n\times\mathbb{N}_0^n$ a linear operator $T_m$ of the form:
\begin{equation}\label{pseudo}
T_mf(x)=\sum_{\nu\in\mathbb{N}^n_0}m(x,\nu)\widehat{f}(\phi_\nu)\phi_\nu(x),\,\,\,\,f\in \textnormal{Dom}(T_m).
\end{equation}
The  function $m$ on $\mathbb{R}^n\times \mathbb{N}_0^n$ is called the symbol of the pseudo-multiplier $T_m.$ If in \eqref{pseudo}, $m(x,\nu)=m(\nu)$ for all $x,$ the operator $T_m$ is called a multiplier. Multipliers and pseudo-multipliers have been studied, for example, in the works \cite{BagchiThangavelu,stempak,stempak1,stempak2,Thangavelu,Thangavelu2} (and references therein) principally by its mapping properties on $L^p$ spaces. In order that the operator $T_m:L^{p_1}(\mathbb{R}^n)\rightarrow L^{p_2}(\mathbb{R}^n)$ extends to a $r$-nuclear operator, in this paper we  provide necessary and sufficient conditions on the symbol $m.$

\subsection{Nuclearity of pseudo-multipliers} We recall the notion of $r$-nuclearity as follows.  By following A. Grothendieck \cite{GRO}, we can recall that a linear operator $T:E\rightarrow F$  ($E$ and $F$ Banach spaces) is  $r$-nuclear, if
there exist  sequences $(e_n ')_{n\in\mathbb{N}_0}$ in $ E'$ (the dual space of $E$) and $(y_n)_{n\in\mathbb{N}_0}$ in $F$ such that
\begin{equation}\label{nuc}
Tf=\sum_{n\in\mathbb{N}_0} e_n'(f)y_n,\,\,\, \textnormal{ and }\,\,\,\sum_{n\in\mathbb{N}_0} \Vert e_n' \Vert^r_{E'}\Vert y_n \Vert^r_{F}<\infty.
\end{equation}
\noindent The class of $r-$nuclear operators is usually endowed with the quasi-norm
\begin{equation}
n_r(T):=\inf\left\{ \left\{\sum_n \Vert e_n' \Vert^r_{E'}\Vert y_n \Vert^r_{F}\right\}^{\frac{1}{r}}: T=\sum_n e_n'\otimes y_n \right\}
\end{equation}
\noindent and, if $r=1$, $n_1(\cdot)$ is a norm and we obtain the ideal of nuclear operators. In addition, when $E=F$ is a Hilbert space and $r=1$ the definition above agrees with the concept of  trace class operators. For the case of Hilbert spaces $H$, the set of $r$-nuclear operators agrees with the Schatten-von Neumann class of order $r$ (see Pietsch  \cite{P,P2}).\\
\\
In order to study the $r$-nuclearity and the spectral trace of Hermite pseudo-multipliers, we will use results from J. Delgado \cite{D2}, on the characterization of nuclear integral  operators on $L^p(X,\mu)$ spaces, which in this case can be applied to $L^p$ spaces on $\mathbb{R}^n$. Indeed, we will prove that under certain conditions, a $r$-nuclear operator $T_m: L^{p}(\mathbb{R}^n)\rightarrow  L^{p}(\mathbb{R}^n)$ has a nuclear trace given by
\begin{eqnarray}
\textnormal{Tr}(T_m)=\int_{\mathbb{R}^n}\sum_{\nu\in\mathbb{N}_0^n}m(x,\nu)\phi_\nu(x)^2dx.
\end{eqnarray}

It was proved in  \cite{BarrazaCardona} that a multiplier $T_m$ with symbol satisfying one of the following conditions
\begin{itemize}
\item $1\leq p_2<4,$ $\frac{4}{3}<p_1<\infty$ and
\begin{equation}
\varkappa(m,p_1,p_2):=\sum_{s=0}^n\sum_{\nu\in I_s}k^{\frac{sr}{2}(\frac{1}{p_2}-\frac{1}{p_1})} (\prod_{\nu_j>k}\nu_j )^{\frac{r}{2}(\frac{1}{p_2}-\frac{1}{p_1})}|m(\nu)|^r<\infty,
\end{equation}
\item $1\leq p_2<4,$  $p_1=\frac{4}{3}$ and
\begin{equation}
\varkappa(m,p_1,p_2):=\sum_{s=0}^n\sum_{\nu\in I_s}{k}^{\frac{sr}{2}(\frac{1}{p_2}-\frac{3}{4})}(\ln k)^{sr}\cdot  \prod_{\nu_j>k}[{\nu_j}^{\frac{r}{2}(\frac{1}{p_2}-\frac{3}{4})}(\ln(\nu_j))^r]|m(\nu)|^r<\infty,
\end{equation}
\item  $1\leq p_2<4,$  $1<p_1<\frac{4}{3}$ and
\begin{equation}
\varkappa(m,p_1,p_2):=\sum_{s=0}^n\sum_{\nu\in I_s} k^{\frac{sr}{2}(\frac{1}{p_2}+\frac{1}{3p_1}-1)}\cdot (\prod_{\nu_j>k} \nu_j)^{\frac{r}{2}(\frac{1}{p_2}+\frac{1}{3p_1}-1)}|m(\nu)|^r<\infty,
\end{equation}
\item $p_2=4,$ $\frac{4}{3}<p_1<\infty$ and
\begin{equation}
\varkappa(m,p_1,p_2):=\sum_{s=0}^{n} \sum_{\nu\in {I}_s}k^{\frac{sr}{2}(\frac{1}{4}-\frac{1}{p_1})}(\ln (k))^{sr}\prod_{\nu_j>k}[(\ln(\nu_j))^r\nu_j^{\frac{r}{2}(\frac{1}{4}-\frac{1}{p_1})}]|m(\nu)|^r<\infty,
\end{equation}
\item $p_2=4,$ $p_1=\frac{4}{3}$ and
\begin{equation} \varkappa(m,p_1,p_2):=\sum_{s=0}^{n}
\sum_{\nu\in {I}_s}k^{-\frac{sr}{4}}(\ln k)^{2sr}\prod_{\nu_j>k}[\nu_j^{-\frac{r}{4}}(\ln \nu_j)^{2r}]\cdot |m(\nu)|^r<\infty,
\end{equation}
\item $p_2=4,$ $1<p_1<\frac{4}{3}$ and
\begin{equation}
\varkappa(m,p_1,p_2):=\sum_{s=0}^{n} \sum_{\nu\in I_s}k^{\frac{sr}{6}(\frac{1}{p_1}-\frac{9}{4})}(\ln(k))^{sr}\prod_{\nu_j>k}[\nu_j^{\frac{r}{6}(\frac{1}{p_1}-\frac{9}{4})}\ln(\nu_j)^r  ]\cdot |m(\nu)|^r<\infty,
\end{equation}
\item  $4<p_2\leq \infty,$ $\frac{4}{3}<p_1<\infty $ and
\begin{equation}
\varkappa(m,p_1,p_2):=\sum_{s=0}^{n} \sum_{\nu\in I_s}k^{\frac{sr}{2}(\frac{1}{3p_2'}-\frac{1}{p_1})}(\prod_{\nu_j>k}\nu_j)^{\frac{r}{2}(\frac{1}{3p_2'}-\frac{1}{p_1})}|m(\nu)|^r<\infty,
\end{equation}
\item  $4<p_2\leq \infty,$ $p_1=\frac{4}{3}$ and
\begin{equation}
\varkappa(m,p_1,p_2):=\sum_{s=0}^{n} \sum_{\nu\in I_s}k^{-\frac{sr}{6}(\frac{1}{p_2}+\frac{5}{4})}(\ln(k))^{sr}\prod_{\nu_j>k}[  \nu_j^{-\frac{r}{6}(\frac{1}{p_2}+\frac{5}{4})}(\ln(\nu_j))^{r}  ]|m(\nu)|^r<\infty,
\end{equation}
\item $4<p_2\leq \infty,$  $1<p_1<\frac{4}{3}$ and
\begin{equation}
\varkappa(m,p_1,p_2):=\sum_{s=0}^{n} \sum_{\nu\in I_s}k^{\frac{sr}{6}(\frac{1}{p_1}-\frac{1}{p_2}-2)}\cdot (\prod_{\nu_j>k}\nu_j)^{\frac{r}{6}(\frac{1}{p_1}-\frac{1}{p_2}-2)}|m(\nu)|^r<\infty,
\end{equation}
\end{itemize}
where $\{I_s\}_{s=0}^n$ is a suitable partition of $\mathbb{N}_0^n,$ can be extended to a $r$-nuclear operator from $L^{p_1}(\mathbb{R}^n)$ into $L^{p_2}(\mathbb{R}^n).$ Although is easy to see that similar necessary conditions apply for pseudo-multipliers, and that such conditions can be useful for applications because they can verified, for example, numerically for $m$ given, in this paper we want to characterize the $r$-nuclearity of pseudo-multipliers by using abstract conditions depending on the existence of certain measurable functions. In fact, the main result of this paper is the following.

\begin{theorem}
 $T_{m}:L^{p_1}(\mathbb{R}^n)\rightarrow L^{p_2}(\mathbb{R}^n)$ extends to a $r$-nuclear operator, if and only if, for every $\nu\in\mathbb{N}_0^n,$ the function $m(\cdot,\nu)\phi_\nu$ admits a decomposition of the form
\begin{equation}\label{admit}
m(x,\nu)\phi_{\nu}(x)=\sum_{k=1}^{\infty}h_{k}(x)\widehat{g}(\phi_\nu)
\end{equation} where   $\{g_k\}_{k\in\mathbb{N}}$ and $\{h_k\}_{k\in\mathbb{N}}$ are sequences of functions satisfying 
\begin{equation}
\sum_{k=0}^{\infty}\Vert g_k\Vert^r_{L^{p_1'}}\Vert h_{k}\Vert^r_{L^{p_2}}<\infty.
\end{equation}
\end{theorem}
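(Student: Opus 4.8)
The plan is to deduce the characterization directly from the abstract definition of $r$-nuclearity in \eqref{nuc}, using only two structural features of the setting: the duality $(L^{p_1}(\mathbb{R}^n))'\cong L^{p_1'}(\mathbb{R}^n)$, valid for $1\le p_1<\infty$, and the fact that each Hermite function $\phi_\mu$ is a Schwartz function lying in every $L^{p}(\mathbb{R}^n)$. The pivotal observation is that probing $T_m$ against a single Hermite function collapses the series \eqref{pseudo}: since $\widehat{\phi_\mu}(\phi_\nu)=\langle\phi_\mu,\phi_\nu\rangle_{L^2}=\delta_{\mu\nu}$, one gets
\[
T_m\phi_\mu(x)=m(x,\mu)\,\phi_\mu(x),\qquad \mu\in\mathbb{N}_0^n .
\]
Dually, for $g\in L^{p_1'}(\mathbb{R}^n)$ the number $\widehat{g}(\phi_\mu)=\int_{\mathbb{R}^n}g\,\phi_\mu\,dx$ is well defined because $\phi_\mu$ is rapidly decaying. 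I stress at the outset that the sequences $\{g_k\}$ and $\{h_k\}$ in \eqref{admit} must be the same for all $\nu$; the entire $\nu$-dependence of the symbol is carried by the Fourier--Hermite coefficients $\widehat{g_k}(\phi_\nu)$, and it is precisely this $\nu$-uniformity that mirrors a nuclear decomposition.

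For the necessity direction, assume $T_m$ extends to an $r$-nuclear operator. By \eqref{nuc} together with $(L^{p_1})'\cong L^{p_1'}$, there are $g_k\in L^{p_1'}(\mathbb{R}^n)$ and $h_k\in L^{p_2}(\mathbb{R}^n)$ with
\[
T_mf=\sum_{k}\Big(\int_{\mathbb{R}^n}f\,g_k\,dx\Big)h_k,\qquad \sum_k\|g_k\|_{L^{p_1'}}^r\|h_k\|_{L^{p_2}}^r<\infty .
\]
Evaluating this representation at $f=\phi_\mu$ and comparing with the collapsed identity above yields $m(x,\mu)\phi_\mu(x)=\sum_k h_k(x)\,\widehat{g_k}(\phi_\mu)$ for every $\mu$, which is exactly \eqref{admit}; the summability requirement is inherited verbatim from the nuclear representation.

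For the sufficiency direction I would run the argument backwards while avoiding any direct manipulation of double series. Given sequences realizing \eqref{admit} for all $\nu$ with $\sum_k\|g_k\|_{L^{p_1'}}^r\|h_k\|_{L^{p_2}}^r<\infty$, define
\[
Sf:=\sum_{k}\Big(\int_{\mathbb{R}^n}f\,g_k\,dx\Big)h_k .
\]
By \eqref{nuc} this $S$ is an $r$-nuclear operator from $L^{p_1}(\mathbb{R}^n)$ into $L^{p_2}(\mathbb{R}^n)$, hence bounded. Testing $S$ on the Hermite basis and invoking \eqref{admit} at $\nu=\mu$ gives $S\phi_\mu=\sum_k\widehat{g_k}(\phi_\mu)\,h_k=m(\cdot,\mu)\phi_\mu=T_m\phi_\mu$. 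Since $S$ is continuous on $L^{p_1}(\mathbb{R}^n)$ and agrees with $T_m$ on the span of $\{\phi_\mu\}$, which is dense in $L^{p_1}(\mathbb{R}^n)$ for $1\le p_1<\infty$, the operator $S$ is the asserted $r$-nuclear extension of $T_m$.

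The step I expect to require the most care is the convergence bookkeeping that, in a naive treatment, would appear as the interchange $\sum_\nu\big(\sum_k\cdots\big)\widehat{f}(\phi_\nu)=\sum_k\big(\sum_\nu\cdots\big)$. Routing the sufficiency argument through the operator $S$ sidesteps this: $S$ is a genuine $r$-nuclear (hence bounded) operator by construction, so I only need to identify it with $T_m$ on the dense Hermite span rather than justify Fubini-type exchanges in the defining series. What remains to be checked with some attention are the supporting functional-analytic facts — the hypothesis $1\le p_1<\infty$ needed for $(L^{p_1})'\cong L^{p_1'}$, the density of the span of $\{\phi_\mu\}$ in $L^{p_1}(\mathbb{R}^n)$, and the well-definedness of $\widehat{g_k}(\phi_\mu)$ for $g_k\in L^{p_1'}$ — each of which follows from the Schwartz regularity and the orthonormal-basis property of the Hermite functions.
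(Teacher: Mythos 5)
Your proposal is correct, and the necessity half is in substance identical to the paper's: both evaluate the nuclear expansion at $f=\phi_\mu$, use that $\phi_\mu\in\mathscr{S}\subset L^{p_1}$, and compare with $T_m\phi_\mu=m(\cdot,\mu)\phi_\mu$ obtained from orthonormality (you work from the abstract definition \eqref{nuc} plus the duality $(L^{p_1})'\cong L^{p_1'}$, the paper routes this through Delgado's characterization, Theorem \ref{Theorem1}; these are interchangeable here). The sufficiency half is where you genuinely diverge. The paper substitutes the decomposition into the defining series \eqref{pseudo}, interchanges $\sum_\nu$ with $\sum_k$, pulls the integral outside, and invokes the inversion formula $\sum_\nu\widehat{f}(\phi_\nu)\phi_\nu=f$ for a general $f\in L^{p_1}$ to recognize the kernel $\sum_k h_k(x)g_k(y)$, then applies Theorem \ref{Theorem1}. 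You instead build the operator $S f=\sum_k(\int fg_k)h_k$, which is $r$-nuclear by fiat, verify $S\phi_\mu=T_m\phi_\mu$ using \eqref{admit}, and conclude by density of the Hermite span in $L^{p_1}$ ($1\le p_1<\infty$). Your route buys something real: it sidesteps both the Fubini-type interchange (which the paper performs without justification) and the convergence of the Hermite expansion in $L^{p_1}$ (a delicate matter outside the range $4/3<p_1<4$), replacing them with the elementary facts that the nuclear series converges absolutely and that finite linear combinations of Hermite functions are dense in $L^{p_1}$, on which $T_m$ acts by finite sums. The only point to make explicit is the sense of the a.e.\ identity in \eqref{admit}: since $\sum_k\Vert h_k\Vert_{L^{p_2}}\Vert g_k\Vert_{L^{p_1'}}<\infty$ (which follows from the $r$-summability with $0<r\le 1$), the series $\sum_k h_k(x)\widehat{g_k}(\phi_\mu)$ converges absolutely a.e.\ and its pointwise sum agrees with its $L^{p_2}$ limit, so comparing it pointwise with $m(\cdot,\mu)\phi_\mu$ is legitimate.
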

Some remarks about our main theorem is the following.
\begin{itemize}
\item A consequence of the above theorem is that symbols associated to nuclear multipliers admit decomposition of the form $
m(\nu)=\sum_{k=0}^{\infty}\widehat{h}_{k}(\phi_\nu)\widehat{g}_{k}(\phi_\nu).
$ This can be obtained multiplying both sides of \eqref{admit} by $\phi_\nu$ and later integrating both sides over $\mathbb{R}^n.$
\item Our approach is an adaptation to the non-compact case of $\mathbb{R}^n$ of techniques used in the  work \cite{Ghaemi} by M. B. Ghaemi, M. Jamalpour Birgani, and M. W. Wong.
\item For every $\nu,$ the function $\phi_\nu$ has only  finitely many zeros. So, the set $M=\{x: \phi_\nu(x)=0\textnormal{ for some }\nu\}$ is a countable subset of $\mathbb{R}^n.$ According to \eqref{admit}, outside of the set $M$ we have 
\begin{equation}\label{admit'}
m(x,\nu)=\phi_{\nu}(x)^{-1}\sum_{k=1}^{\infty}h_{k}(x)\widehat{g}(\phi_\nu).
\end{equation} 
\end{itemize}

\subsection{Related works}
Now, we include some references on the subject. Sufficient conditions for the  $r$-nuclearity of spectral multipliers associated to the harmonic oscillator, but, in modulation spaces and Wiener amalgam spaces have been considered by  J. Delgado, M. Ruzhansky and B. Wang in \cite{DRB,DRB2}. The Properties of these multipliers in $L^p$-spaces have been investigated in the references S. Bagchi, S. Thangavelu  \cite{BagchiThangavelu}, J. Epperson \cite{Epperson},  K. Stempak and J.L. Torrea \cite{stempak,stempak1,stempak2},  S. Thangavelu \cite{Thangavelu,Thangavelu2} and references therein. Hermite expansions for distributions can be found in B. Simon \cite{Simon}. The $r$-nuclearity and Grothendieck-Lidskii formulae for multipliers and other types of integral operators can be found in \cite{D3,DRB2}. Sufficient conditions for the nuclearity of  pseudo-differential operators on the torus can be found in \cite{DW,Ghaemi}. The references \cite{DR,DR1,DR3,DR5} and \cite{DRTk} include a complete study on the $r$-nuclearity, $0<r\leq 1,$ of multipliers (and pseudo-differential operators) on compact Lie groups, and more generally on compact manifolds, with explicit conditions on  symbols of operators providing an useful tool for applications (see \cite{CardonaDelCorral}). For compact and Hausdorff groups, the  work \cite{Ghaemi2} by M. B. Ghaemi, M. Jamalpour Birgani, and M. W. Wong characterize in terms of the existence of certain measurable functions the nuclearity of pseudo-differential operators. On Hilbert spaces the class of $r$-nuclear operators agrees with the Schatten-von Neumann class $S_{r}(H);$ in this context operators with integral kernel on Lebesgue spaces and, in particular, operators with  kernel acting of a special way with anharmonic oscillators of the form $E_a=-\Delta_x+|x|^a,$ $a>0,$ has been considered on  Schatten classes on $L^2(\mathbb{R}^n)$ in J. Delgado and M. Ruzhansky \cite{kernelcondition}.

The proof of our  results will be presented in the next section.

\section{Nuclear pseudo-multipliers associated to the harmonic oscillator}
\subsection{Characterization of nuclear pseudo-multipliers}
In this section we prove our main result for pseudo-multipliers $T_m$. Our criteria will be formulated in terms of the symbols $m.$ First, let us observe that every multiplier $T_m$ is an operator with kernel $K_m(x,y).$ In fact, straightforward computation show that
\begin{equation}\label{kernelpseudo}
T_mf(x) =\int_{\mathbb{R}^n}K_m(x,y)f(y)dy,\,\,K_m(x,y):=\sum_{\nu\in\mathbb{N}^n_0}m(x,\nu)\phi_\nu(x)\phi_{\nu}(y)
\end{equation}
for every  $f\in\mathscr{D}(\mathbb{R}^n).$ In order to analyze the $r$-nuclearity of $T_m$ we study its kernel $K_m$ by using the following theorem (see J. Delgado \cite{Delgado,D2}).
\begin{theorem}\label{Theorem1} Let us consider $1\leq p_1,p_2<\infty,$ $0<r\leq 1$ and let $p_1'$ be such that $\frac{1}{p_1}+\frac{1}{p_1'}=1.$ An operator $T:L^p(\mu_1)\rightarrow L^p(\mu_2)$ is $r$-nuclear if and only if there exist sequences $(g_n)_n$ in $L^{p_2}(\mu_2),$ and $(h_n)$ in $L^{q_1}(\mu_1),$ such that
\begin{equation}
\sum_{n}\Vert g_n\Vert_{L^{p_2}}^r\Vert h_n\Vert^r_{L^{q_1} } <\infty,\textnormal{        and        }Tf(x)=\int(\sum_ng_{n}(x)h_n(y))f(y)d\mu_1(y),\textnormal{   a.e.w. }x,
\end{equation}
for every $f\in {L^{p_1}}(\mu_1).$ In this case, if $p_1=p_2$ $($\textnormal{see Section 3 of} \cite{Delgado}$)$ the nuclear trace of $T$ is given by
\begin{equation}\label{trace1}
\textnormal{Tr}(T):=\int\sum_{n}g_{n}(x)h_{n}(x)d\mu_1(x).
\end{equation}
\end{theorem}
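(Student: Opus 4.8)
The plan is to prove both implications by passing through the Delgado kernel characterization (Theorem~\ref{Theorem1}), using that $T_m$ is an integral operator with kernel $K_m(x,y)=\sum_{\nu}m(x,\nu)\phi_\nu(x)\phi_\nu(y)$ as in \eqref{kernelpseudo}. The conceptual point is that the kernel decomposition $K_m(x,y)=\sum_k G_k(x)H_k(y)$ furnished by Theorem~\ref{Theorem1} is \emph{equivalent} to the symbol decomposition \eqref{admit}, the dictionary between them being the Fourier--Hermite transform in the second variable: a factor $H_k\in L^{p_1'}$ is interchanged with its Hermite coefficients $\widehat{H}_k(\phi_\nu)=\langle H_k,\phi_\nu\rangle_{L^2}$, while orthonormality and the inversion formula \eqref{inversion} translate $K_m(x,y)$ into $m(x,\nu)\phi_\nu(x)$. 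Throughout I write the coefficient in \eqref{admit} as $\widehat{g}_k(\phi_\nu)=\langle g_k,\phi_\nu\rangle_{L^2}$ and identify the roles $h_k\leftrightarrow G_k\in L^{p_2}$ and $g_k\leftrightarrow H_k\in L^{p_1'}$, so that the summability $\sum_k\Vert g_k\Vert_{L^{p_1'}}^r\Vert h_k\Vert_{L^{p_2}}^r<\infty$ is exactly the one in Theorem~\ref{Theorem1}.

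For necessity ($\Rightarrow$) I would assume $T_m$ is $r$-nuclear and apply Theorem~\ref{Theorem1} to obtain sequences $G_k\in L^{p_2}$, $H_k\in L^{p_1'}$ with $\sum_k\Vert G_k\Vert_{L^{p_2}}^r\Vert H_k\Vert_{L^{p_1'}}^r<\infty$ and $T_mf(x)=\int(\sum_k G_k(x)H_k(y))f(y)\,dy$ a.e. The key step is to evaluate this identity on the test functions $f=\phi_\mu$. From the definition \eqref{pseudo} and orthonormality $\langle\phi_\mu,\phi_\nu\rangle_{L^2}=\delta_{\mu\nu}$ one gets $T_m\phi_\mu(x)=m(x,\mu)\phi_\mu(x)$ directly, while the integral representation gives $\sum_k G_k(x)\widehat{H}_k(\phi_\mu)$; equating the two yields \eqref{admit} with $h_k=G_k$ and $g_k=H_k$. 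The interchange of the $k$-sum with the integral is legitimate because for $0<r\le 1$ one has the embedding $\ell^r\hookrightarrow\ell^1$, so that $\sum_k\Vert G_k\Vert_{L^{p_2}}\Vert H_k\Vert_{L^{p_1'}}\le(\sum_k\Vert G_k\Vert_{L^{p_2}}^r\Vert H_k\Vert_{L^{p_1'}}^r)^{1/r}<\infty$, and $|\int H_k\phi_\mu|\le\Vert H_k\Vert_{L^{p_1'}}\Vert\phi_\mu\Vert_{L^{p_1}}$ by Hölder gives absolute convergence.

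For sufficiency ($\Leftarrow$), starting from \eqref{admit} I would insert it into \eqref{pseudo} for a test function $f$, noting that the factor $\phi_\nu(x)$ appearing in \eqref{pseudo} combines with $m(x,\nu)$ to produce exactly $m(x,\nu)\phi_\nu(x)=\sum_k h_k(x)\widehat{g}_k(\phi_\nu)$, so that $T_mf(x)=\sum_\nu\widehat{f}(\phi_\nu)\sum_k h_k(x)\widehat{g}_k(\phi_\nu)$. After interchanging the $\nu$- and $k$-summations I collapse the $\nu$-sum by Parseval, $\sum_\nu\widehat{f}(\phi_\nu)\widehat{g}_k(\phi_\nu)=\int g_k(y)f(y)\,dy$ (valid for Schwartz $f$, whose Hermite expansion converges in $L^{p_1}$, paired against $g_k\in L^{p_1'}$), obtaining $T_mf(x)=\int(\sum_k h_k(x)g_k(y))f(y)\,dy$. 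This is precisely the integral form required by Theorem~\ref{Theorem1} with $h_k\in L^{p_2}$ and $g_k\in L^{p_1'}$; together with the prescribed summability and the density of test functions in $L^{p_1}$, Theorem~\ref{Theorem1} delivers the $r$-nuclearity of $T_m$.

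The algebra above is forced by orthonormality and the inversion formula \eqref{inversion}, so the main obstacle is the rigorous justification of the interchange of the double series $\sum_\nu\sum_k$ in the sufficiency part. I would control it as follows: the auxiliary function $\Phi(x):=\sum_k|h_k(x)|\,\Vert g_k\Vert_{L^{p_1'}}$ lies in $L^{p_2}$, since by Minkowski's inequality and $\ell^r\hookrightarrow\ell^1$ one has $\Vert\Phi\Vert_{L^{p_2}}\le\sum_k\Vert h_k\Vert_{L^{p_2}}\Vert g_k\Vert_{L^{p_1'}}\le(\sum_k\Vert h_k\Vert_{L^{p_2}}^r\Vert g_k\Vert_{L^{p_1'}}^r)^{1/r}<\infty$, hence $\Phi$ is finite almost everywhere. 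Using the Hölder bound $|\widehat{g}_k(\phi_\nu)|\le\Vert g_k\Vert_{L^{p_1'}}\Vert\phi_\nu\Vert_{L^{p_1}}$ and the fact that $\Vert\phi_\nu\Vert_{L^{p_1}}$ grows at most polynomially in $\nu$ while $\widehat{f}(\phi_\nu)$ decays faster than any polynomial for Schwartz $f$, the constant $C_f:=\sum_\nu|\widehat{f}(\phi_\nu)|\,\Vert\phi_\nu\Vert_{L^{p_1}}$ is finite; thus the double series is dominated by $C_f\,\Phi(x)<\infty$ for a.e.\ $x$, and Fubini's theorem legitimizes every rearrangement, completing the equivalence.
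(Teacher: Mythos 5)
There is a genuine gap, and it is structural: your proposal does not prove the statement in question at all. The statement is Delgado's characterization of $r$-nuclear operators $T\colon L^{p_1}(\mu_1)\to L^{p_2}(\mu_2)$ on general measure spaces, which the paper itself does not prove but quotes from \cite{Delgado,D2}. Your argument instead treats Theorem \ref{Theorem1} as a black box — both of your implications literally begin by ``applying Theorem~\ref{Theorem1}'' — and uses it to derive the paper's \emph{main} theorem, namely the equivalence between $r$-nuclearity of the pseudo-multiplier $T_m$ and the symbol decomposition \eqref{admit}. As a proof of Theorem \ref{Theorem1} this is circular: you assume the very equivalence you were asked to establish. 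A genuine proof must start from Grothendieck's definition \eqref{nuc}: for the ``only if'' direction, since $1\le p_1<\infty$ and the measures are $\sigma$-finite, each functional $e_n'\in (L^{p_1}(\mu_1))'$ is represented by a function $h_n\in L^{p_1'}(\mu_1)$ with $\Vert h_n\Vert_{L^{p_1'}}=\Vert e_n'\Vert$, one sets $g_n:=y_n\in L^{p_2}(\mu_2)$, and one must then show (after normalizing, and using $\ell^r\hookrightarrow\ell^1$) that $\sum_n g_n(x)h_n(y)$ converges $\mu_2\otimes\mu_1$-almost everywhere to a kernel representing $T$; the converse direction reads the kernel factorization back as a nuclear decomposition. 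The trace formula \eqref{trace1} is a further nontrivial point — it is exactly what Section 3 of \cite{Delgado} is devoted to — since one must check that the diagonal restriction $\sum_n g_n(x)h_n(x)$ is integrable and independent of the chosen decomposition; your proposal says nothing about the trace at all.

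For what it is worth, had the target been the paper's main theorem rather than Theorem \ref{Theorem1}, your argument would essentially coincide with the paper's own proof: necessity by evaluating the integral representation on $f=\phi_\mu$ and comparing with $T_m\phi_\mu(x)=m(x,\mu)\phi_\mu(x)$ from \eqref{pseudo}, sufficiency by inserting \eqref{admit} into \eqref{pseudo} and collapsing the $\nu$-sum through the inversion formula \eqref{inversion}. Indeed your domination of the double series by $C_f\,\Phi(x)$ with $\Phi=\sum_k|h_k|\,\Vert g_k\Vert_{L^{p_1'}}\in L^{p_2}$ is more careful than the paper, which interchanges $\sum_\nu$ and $\sum_k$ without comment. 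But that correct work is aimed at the wrong statement, and it cannot be salvaged into a proof of Theorem \ref{Theorem1} because it consumes that theorem as its main tool.
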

Now, we prove our main theorem.
\begin{theorem}
 Let $0<r\leq 1.$ The operator $T_{m}:L^{p_1}(\mathbb{R}^n)\rightarrow L^{p_2}(\mathbb{R}^n)$ extends to a $r$-nuclear operator, if and only if, for every $\nu\in\mathbb{N}_0^n,$ the function $m(\cdot,\nu)\phi_\nu$ admits a decomposition of the form
\begin{equation}
m(x,\nu)\phi_{\nu}(x)=\sum_{k=1}^{\infty}h_{k}(x)\widehat{g}(\phi_\nu)
\end{equation} where   $\{g_k\}_{k\in\mathbb{N}}$ and $\{h_k\}_{k\in\mathbb{N}}$ are sequences of functions satisfying 
\begin{equation}
\sum_{k=0}^{\infty}\Vert g_k\Vert^r_{L^{p_1'}}\Vert h_{k}\Vert^r_{L^{p_2}}<\infty.
\end{equation}
\end{theorem}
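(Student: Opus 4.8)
The plan is to reduce the statement entirely to Delgado's characterization in Theorem \ref{Theorem1}, using the kernel formula \eqref{kernelpseudo} and the orthonormality of the Hermite system as the bridge between the symbol $m$ and a nuclear kernel decomposition. The guiding observation is that evaluating $T_m$ on the basis functions $\phi_\nu$ recovers the left-hand side of \eqref{admit}: by \eqref{pseudo} and $\widehat{\phi_\nu}(\phi_\mu)=\langle \phi_\nu,\phi_\mu\rangle_{L^2}=\delta_{\nu\mu}$ one has $T_m\phi_\nu(x)=m(x,\nu)\phi_\nu(x)$. Thus \eqref{admit} is exactly the assertion that $T_m\phi_\nu$ coincides with $\sum_k h_k\,\widehat{g}_k(\phi_\nu)$, which is how the kernel $\sum_k h_k(x)g_k(y)$ furnished by Theorem \ref{Theorem1} acts on $\phi_\nu$.

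For the forward implication I would assume $T_m$ extends to an $r$-nuclear operator from $L^{p_1}(\mathbb{R}^n)$ into $L^{p_2}(\mathbb{R}^n)$. Theorem \ref{Theorem1} then provides sequences $(h_k)_k$ in $L^{p_2}(\mathbb{R}^n)$ and $(g_k)_k$ in $L^{p_1'}(\mathbb{R}^n)$ with $\sum_k \|g_k\|_{L^{p_1'}}^r\|h_k\|_{L^{p_2}}^r<\infty$ and $T_mf(x)=\int_{\mathbb{R}^n}\bigl(\sum_k h_k(x)g_k(y)\bigr)f(y)\,dy$ for a.e. $x$ and all $f\in L^{p_1}(\mathbb{R}^n)$. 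Since $\phi_\nu\in L^{p_1}(\mathbb{R}^n)$, I would insert $f=\phi_\nu$: the right-hand side becomes $\sum_k h_k(x)\int_{\mathbb{R}^n}g_k(y)\phi_\nu(y)\,dy=\sum_k h_k(x)\widehat{g}_k(\phi_\nu)$, while the left-hand side equals $m(x,\nu)\phi_\nu(x)$ by the computation above. Equating the two yields the decomposition \eqref{admit} together with its summability.

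For the converse I would start from a decomposition $m(x,\nu)\phi_\nu(x)=\sum_k h_k(x)\widehat{g}_k(\phi_\nu)$ satisfying $\sum_k \|g_k\|_{L^{p_1'}}^r\|h_k\|_{L^{p_2}}^r<\infty$ and reconstruct the kernel. Substituting into \eqref{kernelpseudo} and using the Hermite inversion formula \eqref{inversion} for each $g_k$,
\begin{align*}
K_m(x,y)&=\sum_{\nu\in\mathbb{N}_0^n}m(x,\nu)\phi_\nu(x)\phi_\nu(y)=\sum_{\nu}\Bigl(\sum_{k}h_k(x)\widehat{g}_k(\phi_\nu)\Bigr)\phi_\nu(y)\\
&=\sum_{k}h_k(x)\sum_{\nu}\widehat{g}_k(\phi_\nu)\phi_\nu(y)=\sum_{k}h_k(x)g_k(y).
\end{align*}
This exhibits the kernel of $T_m$ in exactly the form required by Theorem \ref{Theorem1}, whose summability hypothesis is the one assumed; hence the operator $S$ with this kernel is $r$-nuclear. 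It then remains to identify $S$ with $T_m$, which I would do by noting that $S\phi_\nu=\sum_k h_k\,\widehat{g}_k(\phi_\nu)=m(\cdot,\nu)\phi_\nu=T_m\phi_\nu$, so $S$ and $T_m$ agree on the span of $\{\phi_\nu\}$; since this span is dense in $L^{p_1}(\mathbb{R}^n)$ for $1\le p_1<\infty$ and $S$ is bounded, $S$ is the desired $r$-nuclear extension of $T_m$.

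The main obstacle is the rigorous justification of the interchange of the summations in $\nu$ and $k$ in the converse, and of the convergence of the inner Hermite series $\sum_\nu \widehat{g}_k(\phi_\nu)\phi_\nu=g_k$ in the topology of $L^{p_1'}(\mathbb{R}^n)$ rather than $L^2(\mathbb{R}^n)$. I expect to handle this through the fact that, for $0<r\le 1$, the $r$-summability $\sum_k \|g_k\|_{L^{p_1'}}^r\|h_k\|_{L^{p_2}}^r<\infty$ forces $\sum_k \|g_k\|_{L^{p_1'}}\|h_k\|_{L^{p_2}}<\infty$, since the terms tend to zero and the $r$-th powers eventually dominate the first powers. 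Consequently $\sum_k h_k\otimes g_k$ converges absolutely in the projective tensor product $L^{p_2}(\mathbb{R}^n)\widehat{\otimes}L^{p_1'}(\mathbb{R}^n)$; together with H\"older's inequality this legitimizes the rearrangement, and in any case the dense-subspace identification of $S$ with $T_m$ described above lets me reach the conclusion without relying on the termwise kernel manipulation.
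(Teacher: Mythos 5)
Your proposal is correct and follows essentially the same route as the paper: both directions reduce to Delgado's characterization (Theorem \ref{Theorem1}), with the forward implication obtained by evaluating the nuclear kernel representation at $f=\phi_\nu$ and using $T_m\phi_\nu=m(\cdot,\nu)\phi_\nu$, and the converse by resumming the decomposition via the Hermite inversion formula to exhibit the kernel $\sum_k h_k(x)g_k(y)$. Your added remarks on justifying the interchange of the $\nu$ and $k$ sums (via $\sum_k\Vert g_k\Vert\Vert h_k\Vert<\infty$ following from $r$-summability) and the density identification are more careful than the paper's presentation, but do not change the argument.
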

\begin{proof}
Let us assume that $T_{m}:L^{p_1}(\mathbb{R}^n)\rightarrow L^{p_2}(\mathbb{R}^n)$ is a $r$-nuclear operator. Then there exist sequences $h_{k}$ in $L^{p_2}$ and $g_k$ in $L^{p_1'}$ satisfying
\begin{equation}
Tf(x)=\int_{\mathbb{R}^n}\left(\sum_{k=1}^{\infty}h_k(x)g_{k}(y)\right)f(y)dy,\,\, f\in L^{p_1},
\end{equation} with 
\begin{equation}\label{deco2}
\sum_{k=0}^{\infty}\Vert g_k\Vert^r_{L^{p_1'}}\Vert h_{k}\Vert^r_{L^{p_2}}<\infty.
\end{equation}
Since every Hermite function $\phi_\nu$ belongs to the Schwartz class which is contain in $L^p-$spaces,  for $f=\phi_\nu\in L^{p_1}$  we have
\begin{align*}
T_{m}(\phi_\nu) =\int_{\mathbb{R}^n}\left(\sum_{k=1}^{\infty}h_k(x)g_{k}(y)\right)\phi_\nu(y)dy
=\sum_{k=1}^{\infty}h_k(x)\widehat{g}_{k}(\nu).
\end{align*} Now, if we compute $T_m(\phi_\nu)$ from the definition of pseudo-multipliers \eqref{pseudo}, we obtain
\begin{equation}
T_{m}(\phi_\nu)(x)=m(x,\nu)\phi_{\nu}(x)
\end{equation} where we have used the $L^2$-orthogonality of Hermite functions. Consequently, we deduce the identity
\begin{equation}\label{deco}
m(x,\nu)\phi_{\nu}(x)=\sum_{k=1}^{\infty}h_k(x)\widehat{g}_{k}(\nu).
\end{equation} So, we have proved the first part of the theorem. Now, if we assume that the symbol $m$ of  a multiplier $T_m$ and every Hermite function $\phi_\nu$ satisfies the decomposition formula \ref{deco} for fixed sequences  $h_{k}$ in $L^{p_2}$ and $g_k$ in $L^{p_1'}$ satisfying \eqref{deco2}, then from \eqref{pseudo} we can write
\begin{align*}
T_{m}f(x) &=\sum_{\nu\in\mathbb{N}_0^n}m(x,\nu)\phi_{\nu}(x)\widehat{f}(\phi_\nu)=\sum_{\nu\in\mathbb{N}_0^n}\sum_{k=1}^{\infty}h_k(x)\widehat{g}_{k}(\nu)\widehat{f}(\phi_\nu)\\
&=\sum_{\nu\in\mathbb{N}_0^n}\sum_{k=1}^{\infty}h_k(x)\int_{\mathbb{R}^n}g_k(y)\phi_{\nu}(y)dy\widehat{f}(\phi_\nu)\\
&= \int_{\mathbb{R}^n}\left(\sum_{k=1}^{\infty}h_k(x)g_k(y)\right)\left(\sum_{\nu\in\mathbb{N}_0^n}\widehat{f}(\phi_\nu)\phi_{\nu}(y)\right)dy\\
&= \int_{\mathbb{R}^n}\left(\sum_{k=1}^{\infty}h_k(x)g_k(y)\right)f(y)dy,
\end{align*}
where in the last line we have used the inversion formula \eqref{inversion}. So, by  Delgado Theorem (Theorem \ref{Theorem1}) we end the proof.
\end{proof}

\subsection{Traces of nuclear pseudo-multipliers of the harmonic oscillator} If  $T:E\rightarrow E$ is  $r$-nuclear, with the Banach space $E$ satisfying the Grothendieck approximation property (see Grothendieck\cite{GRO}),  then 
there exist  sequences $(e_n ')_{n\in\mathbb{N}_0}$ in $ E'$ (the dual space of $E$) and $(y_n)_{n\in\mathbb{N}_0}$ in $E$ such that
\begin{equation}\label{nuc2}
Tf=\sum_{n\in\mathbb{N}_0} e_n'(f)y_n,\,\,\,\,\,\textnormal{and}\,\,\,\,\,
\sum_{n\in\mathbb{N}_0} \Vert e_n' \Vert^r_{E'}\Vert y_n \Vert^r_{F}<\infty.
\end{equation}
In this case the nuclear trace of $T$ is given by
$
\textnormal{Tr}(T)=\sum_{n\in\mathbb{N}^n_0}e_n'(f_n).
$
$L^p$-spaces have the Grothendieck approximation property and as consequence we can compute the nuclear trace of every $r$-nuclear pseudo-multipliers. For to do so, let us consider a $r-$nuclear pseudo-multiplier  $T_m:L^{p}(\mathbb{R}^n)\rightarrow L^{p}(\mathbb{R}^n).$ Since the function, $\varkappa(x,y):=\sum_{k}h_k(x)g_k(y)$ is defined a.e.w., let us choose $z\in \mathbb{R}^n$ such that $\varkappa(x,z)$ is finite $a.e.w.$ Let us consider  $B(z,r),$ the ball centered at $z$ with radius $r>0.$ Let us denote by $|B(z,r)|$ the Lebesgue measure of $B(z,r).$ If $f=|B(z,r)|^{-1}\cdot 1_{B(z,r)},$ where $ 1_{B(z,r)}$ is the characteristic function of $B(z,r),$ we obtain
\begin{equation}
T_m(|B(z,r)|^{-1}\cdot 1_{B(z,r)}) =\frac{1}{|B(z,r)|}\int_{B(z,r)}\left(\sum_{k=1}^{\infty}h_k(x)g_k(y)\right)dy
\end{equation}
but, we also have
\begin{eqnarray}
T_{m}( |B(z,r)|^{-1}\cdot 1_{B(z,r)} )=\frac{1}{|B(z,r)|}\int_{B(z,r)}K_m(x,y)dy,
\end{eqnarray}
where $K_m$ is defined as in \eqref{kernelpseudo}. So, we have the identity
\begin{eqnarray}
\frac{1}{|B(z,r)|}\int_{B(z,r)}K_m(x,y)dy=\frac{1}{|B(z,r)|}\int_{B(z,r)}\left(\sum_{k=1}^{\infty}h_k(x)g_k(y)\right)dy
\end{eqnarray}
for every $r>0.$ Taking limit as $r\rightarrow 0^+$ and by applying Lebesgue differentiation Theorem, we obtain
\begin{eqnarray}
K_{m}(x,z)=\sum_{k=1}^{\infty}h_k(x)g_k(z),\,\,a.e.w.
\end{eqnarray} Finally, the nuclear trace of $T_m$ can be computed from \eqref{trace1}. So, we have
\begin{equation}
\textnormal{Tr}(T_m)=\int_{\mathbb{R}^n}K_{m}(x,x)dx=\int_{\mathbb{R}^n}\sum_{\nu\in\mathbb{N}_0^n}m(x,\nu)\phi_\nu(x)^2dx.
\end{equation}

Now, in order to determinate a relation with the eigenvalues of $T_m$ we recall that, the nuclear trace of an $r$-nuclear operator  on a Banach space coincides with the spectral trace, provided that $0<r\leq \frac{2}{3}.$ For $\frac{2}{3}\leq r\leq 1$ we recall the following result (see \cite{O}).
\begin{theorem} Let $T:L^p(\mu)\rightarrow L^p(\mu)$ be a $r$-nuclear operator as in \eqref{nuc2}. If $\frac{1}{r}=1+|\frac{1}{p}-\frac{1}{2}|,$ then, 
\begin{equation}
\textnormal{Tr}(T):=\sum_{n\in\mathbb{N}^n_0}e_n'(f_n)=\sum_{n}\lambda_n(T)
\end{equation}
where $\lambda_n(T),$ $n\in\mathbb{N}$ is the sequence of eigenvalues of $T$ with multiplicities taken into account. 
\end{theorem}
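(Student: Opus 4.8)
The statement is a Grothendieck–Lidskii type trace formula, and the plan is to derive it by reconciling two \emph{a priori} different scalars attached to $T$: the nuclear trace $\textnormal{Tr}(T)=\sum_n e_n'(y_n)$ and the spectral sum $\sum_n\lambda_n(T)$. First I would record that, because $L^p(\mu)$ enjoys the Grothendieck approximation property, the nuclear trace is independent of the representation \eqref{nuc2} and defines a continuous functional on the ideal of $r$-nuclear operators equipped with the quasi-norm $n_r(\cdot)$. Writing $T=\sum_{n}e_n'\otimes y_n$ with $\sum_n\Vert e_n'\Vert^r\Vert y_n\Vert^r<\infty$, I would introduce the finite-rank truncations $T_N=\sum_{n=1}^N e_n'\otimes y_n$, for which $n_r(T-T_N)\to 0$; elementary linear algebra gives $\textnormal{Tr}(T_N)=\sum_j\lambda_j(T_N)$, and continuity of the nuclear trace yields $\textnormal{Tr}(T_N)\to\textnormal{Tr}(T)$. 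The whole problem is thereby reduced to showing that the spectral sum is stable under this approximation.

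The decisive ingredient, and the step I expect to be the main obstacle, is a sharp eigenvalue summability estimate tailored to $L^p$: for the exponent relation $\frac{1}{r}=1+|\frac{1}{p}-\frac{1}{2}|$, every $r$-nuclear operator $S$ on $L^p(\mu)$ has absolutely summable eigenvalues, with the quantitative bound $\sum_n|\lambda_n(S)|\le C\,n_r(S)$. I would establish this through Weyl's inequality, which controls the eigenvalues by the approximation (or Weyl) numbers, combined with the geometry of $L^p(\mu)$: that it has type $2$ for $p\ge 2$ and cotype $2$ for $1\le p\le 2$. Factoring the nuclear operator through a Hilbert space and interpolating between the two extremes realizes the endpoint $r=\frac{2}{3}$ at $p\in\{1,\infty\}$, which is Grothendieck's universal bound, and the endpoint $r=1$ at $p=2$, which is the classical Lidskii theorem on Hilbert space. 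The relation $\frac{1}{r}=1+|\frac{1}{p}-\frac{1}{2}|$ is precisely the threshold at which this interpolation lands the eigenvalues in $\ell^1$.

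Granting the summability estimate, I would finish by linearizing the spectral sum through the Fredholm determinant $\delta(z)=\det(I-zS)$. Since $r\le 1$ the operator $S$ is nuclear, and as $L^p(\mu)$ has the approximation property this determinant is a well-defined entire function with $\delta(0)=1$; the bound $\sum_n|\lambda_n(S)|<\infty$ furthermore gives the Hadamard factorization $\delta(z)=\prod_n(1-z\lambda_n(S))$, whence $\delta'(0)=-\sum_n\lambda_n(S)$. On the other hand, for the finite-rank truncations one computes $\det(I-zT_N)=1-z\,\textnormal{Tr}(T_N)+O(z^2)$, so the derivative at $z=0$ equals $-\textnormal{Tr}(T_N)$; letting $N\to\infty$ and using continuity of the determinant in the nuclear quasi-norm together with $n_r(T-T_N)\to 0$ yields $\delta'(0)=-\textnormal{Tr}(T)$ for $S=T$. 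Comparing the two evaluations of $\delta'(0)$ gives $\textnormal{Tr}(T)=\sum_n\lambda_n(T)$, as desired. When $0<r\le\frac{2}{3}$ the same argument goes through for every $p$ without the exponent constraint, recovering the remark preceding the statement that the nuclear and spectral traces coincide in that range.
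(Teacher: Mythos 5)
The paper does not prove this theorem at all: it is quoted verbatim from Reinov and Latif \cite{O}, so there is no internal argument to compare yours against. Judged on its own terms, your outline reproduces the standard architecture of Grothendieck--Lidskii proofs --- finite-rank truncation, continuity of the nuclear trace via the approximation property, and linearization of the spectral sum through the Fredholm determinant $\det(I-zT)$ --- and that skeleton is sound. You are also right to isolate the eigenvalue summability estimate $\sum_n|\lambda_n(S)|\le C\,n_r(S)$ at the critical exponent $\frac{1}{r}=1+|\frac{1}{p}-\frac{1}{2}|$ as the decisive ingredient: once it is in hand, the determinant argument closes the gap between $\textnormal{Tr}(T_N)\to\textnormal{Tr}(T)$ and $\sum_j\lambda_j(T_N)\to\sum_j\lambda_j(T)$, which would otherwise fail because eigenvalues are not continuous in the nuclear quasi-norm.

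The genuine gap is that this decisive ingredient is asserted rather than proved, and the justification you offer for it --- ``Weyl's inequality combined with type $2$/cotype $2$, interpolating between the endpoint $r=\frac{2}{3}$ at $p\in\{1,\infty\}$ and $r=1$ at $p=2$'' --- is a heuristic, not an argument. Neither the quasi-Banach ideal of $r$-nuclear operators nor the optimal eigenvalue exponent interpolates in any naive sense between those endpoints; the actual proofs (Johnson--K\"onig--Maurey--Retherford for the eigenvalue distribution, and \cite{O} for the version on subspaces and quotients of $L^p$) proceed by factoring $T$ through a Hilbert space using a Grothendieck--Maurey type factorization, estimating the Weyl numbers of each factor separately, and only then invoking Weyl's inequality; none of that is carried out here. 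Since this estimate \emph{is} the theorem --- everything else in your sketch is routine --- the proposal as written is an accurate roadmap to the literature rather than a proof. Given that the paper itself treats the statement as a black box from \cite{O}, the honest options are either to cite it as the authors do or to import the factorization machinery explicitly.
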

As an immediate consequence of the preceding theorem, if $T_m:L^p(\mathbb{R}^n)\rightarrow L^p(\mathbb{R}^n)$ is a $r$-nuclear  pseudo-multiplier and $\frac{1}{r}=1+|\frac{1}{p}-\frac{1}{2}|$ then, 
\begin{equation}
\textnormal{Tr}(T_m)=\int_{\mathbb{R}^n}\sum_{\nu\in\mathbb{N}_0^n}m(x,\nu)\phi_\nu(x)^2dx.=\sum_{n}\lambda_n(T),
\end{equation}
where $\lambda_n(T),$ $n\in\mathbb{N}$ is the sequence of eigenvalues of $T_m$ with multiplicities taken into account.

\bibliographystyle{amsplain}

\end{document}